\theoremstyle{plain}
\numberwithin{equation}{section}
\newtheorem{theorem}{Theorem}[section]
\newtheorem{lemma}[theorem]{Lemma}
\newtheorem{corollary}[theorem]{Corollary}
\newtheorem{definition}[theorem]{Definition}
\newcommand{\fracsm}[2]{\begin{matrix}\frac{#1}{#2}\end{matrix}}
\newcommand{\beq}{\begin{equation}}
\newcommand{\eeq}{\end{equation}}
\newcommand{\Reals}{\mathbb{R}}
\DeclareMathOperator{\diverg}{div}
\thanks{The author was partially supported by the NSF research grant DMS-1311795.}
\begin{document}

\author{Niels Martin M\o{}ller}
\address{Niels Martin M\o{}ller, Department of Mathematics, Princeton University, NJ.}
\email{moller@math.princeton.edu}

\title{Non-existence for self-translating solitons}

\keywords{Mean curvature flow, self-translating solutions, self-similarity, solitons, nonlinear PDEs, minimal surfaces.}

\begin{abstract}
This paper establishes geometric obstructions to the existence of complete, properly embedded, mean curvature flow self-translating solitons $\Sigma^n\subseteq \Reals^{n+1}$, generalizing previously known non-existence conditions such as cylindrical boundedness.
\end{abstract}

\today

\maketitle

\section{introduction}

Let $\Sigma^n\subseteq \Reals^{n+1}$ be a smooth, connected, self-translating $n$-dimensional hypersurface, with translation direction $\mathbf{a}\in\Reals^{n+1}$, which means an oriented surface satisfying
\beq\label{TransEq}
H_{\Sigma}=\langle \mathbf{a},\nu_{\Sigma}\rangle,
\eeq
where $H_{\Sigma}$ is the mean curvature $H_{\Sigma}=\sum_{i=1}^n\kappa_i$ (sum of principal curvatures) with respect to the unit normal $\nu_{\Sigma}$.

Such surfaces appear naturally in the singularity theory of mean curvature flow (associated with the so-called Type II singularities), the simplest example being in the case of (immersed) planar curves ($n=1$), where Calabi's grim reaper self-translating soliton curve $y=\log(\cos x)$ appears as a parabolic rescaling limit (see \cite{Ang}, and see f.ex. \cite{Sh13} for an exposition with references to the higher-dimensional cases). Another related viewpoint is that the mean curvature flow equation, being a nonlinear geometric heat equation, can be expected to admit solitary solutions, i.e. solitons, where the nonlinearity balances out the diffusive nature of the heat dissipation - and most importantly those solutions which move along a (conformal) Killing field in $\Reals^{n+1}$. The self-translating hypersurfaces are in this context those which move, as a mean curvature flow $\{\Sigma_t\}$, along a pure translational field:
\[
\Sigma_t = \Sigma_0 + t\:\mathbf{a},
\]
where $\mathbf{a}\in\Reals^{n+1}$ is a fixed (non-zero) vector. We will in the below generally normalize the translation direction vector to $\mathbf{a}=e_{n+1}$, so that also the speed is fixed $|\mathbf{a}|=1$.

Similarly to in the theory of classical minimal surfaces ($H=0$) in $\Reals^{n+1}$, the set of complete self-translating solitons that can exist is restricted by geometric constraints. Combining knowledge of explicit examples with a maximum principle, as well as by exploiting the symmetries of the equation, one may get such obstructions. Note in this connection that while the minimal surface equation has all rigid motions and homotheties as invariances, the self-translater equation (\ref{TransEq}) is a priori invariant (for fixed $\mathbf{a}$) only under motions that preserve both the translation direction and speed (i.e. that preserve $\mathbf{a}$ as a, not based, vector). In order to rigorously invoke the maximum principle, one is naturally led to consider asymptotical conditions. We consider here primarily one such (not purely asymptotical) condition which involves solid double paraboloidal "funnels" directed along the translation axis. The funnel condition is adapted so that it appears to be the sharp condition (with respect to growth rates, and the order of the height of the enclosed region) which allows one to use a "last point of touching" maximum principle argument, based on the Clutterbuck-Schn\"u{}rer-Schulze \cite{CSS07} (and Altschuler-Wu \cite{AW94}) solitons with an $O(n)$-symmetry in $\Reals^{n+1}$, the topology of these being $\mathbb{S}^n\times \Reals$ (resp. $(\mathbb{S}^n\times \Reals_+) \cup \{*\}$). This family is ample for reaching interesting conclusions on the set of possible translaters.

We denote by $\mathcal{W}_{R}$ the unique two-ended "winglike" self-translating hypersurface from \cite{CSS07} (the $R$-value coincides with the notation in the reference), with initial conditions given by the requirement that the tangent line of the generating curve at $(R,0\ldots,0)\in\Reals^{n+1}$ is parallel to the $e_{n+1}$-direction. By convention, $\mathcal{W}_{R}$ is rotationally symmetric with respect to the principal axis $\{(0,0,\ldots,0,x_{n+1})\}$.

\begin{definition}\label{def_funnel}
The ($e_{n+1}$-directed) solid parabolic funnel of aperture $R_0>0$, logarithmic parameter $\lambda>1$ and center locus at $y_0\in\Reals^{n+1}$, is the closed set $\mathcal{F}^\lambda_{R_0,y_0}\subseteq \Reals^{n+1}$ given by:
\[
\mathcal{F}^\lambda_{R_0,y_0}=\left\{(x_1,x_2,\ldots,x_{n+1})\in\Reals^{n+1}:f_{-}(|p|)\leq x_{n+1}\leq f_{+}(|p|),\: |p|\geq R_0\right\} + y_0,
\]
where $|p|=\sqrt{x_1^2+\ldots + x_n^2}$, and
\begin{align*}
&f_+(r)=\frac{r^2}{2(n-1)} + 1,\\
&f_-(r)=\frac{(r-R_0-2)^2}{2(n-1)}-\lambda\frac{\log\left(1 + (r-R_0-2)^2\right)}{2}-\frac{\pi(R_0+\fracsm{\pi}{2})+4}{2(n-1)}.
\end{align*}
\end{definition}

With the funnel condition, we have the following geometric obstruction:

\begin{theorem}\label{NoFunnels}
Let $\Sigma^n\subseteq\Reals^{n+1}$ ($n\geq 2)$ be a smooth, complete, properly embedded, $e_{n+1}$-self-translating hypersurface,
\beq\label{TransEq}
H_{\Sigma}=\langle e_{n+1},\nu_\Sigma\rangle.
\eeq
Then $\Reals^{n+1}\setminus\Sigma^n$ cannot contain any $e_{n+1}$-directed solid parabolic funnel.
\end{theorem}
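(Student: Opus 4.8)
The plan is to assume the conclusion fails, translate the configuration to the origin, and then run a ``last point of touching'' comparison between $\Sigma$ and vertical translates of the Clutterbuck--Schn\"{u}rer--Schulze winglike solitons $\mathcal{W}_R$, sliding until a tangential interior contact contradicts the strong maximum principle. So suppose some funnel lies in $\Reals^{n+1}\setminus\Sigma$; since the translator equation with direction $e_{n+1}$ is invariant under all ambient translations, we may assume its center is $y_0=0$, i.e.\ $\Sigma$ is disjoint from $\mathcal{F}:=\mathcal{F}^{\lambda}_{R_0,0}$. Recall the shape of $\mathcal{W}_R$: it is rotationally symmetric about the $x_{n+1}$-axis, properly embedded, supported over $\{|p|\ge R\}$, with a single ``neck'' circle of vertical tangency at $|p|=R$, $x_{n+1}=0$, and two sheets meeting there; each sheet is, as $|p|\to\infty$, asymptotic to a vertical translate of the rotationally symmetric bowl soliton $x_{n+1}=\tfrac{|p|^2}{2(n-1)}-\log|p|+\mathcal{O}(1)$, while near the neck (for $R$ large) the profile curve is $C^1$-close to a unit grim-reaper profile, so that the neck region occupies a height-band of width about $\pi$; in particular, outside a compact ball $\mathcal{W}_R$ consists of its two bowl-asymptotic ends. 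Write $\mathcal{W}_R^{s}:=\mathcal{W}_R+s\,e_{n+1}$.

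The first ingredient is a \emph{straddling/fitting estimate}. The walls in Definition~\ref{def_funnel} are calibrated to straddle every vertical translate of the bowl asymptote: $f_+(r)=\tfrac{r^2}{2(n-1)}+1$ eventually exceeds $\tfrac{r^2}{2(n-1)}-\log r+\mathcal{O}(1)$ by at least $1+\log r$, while $f_-(r)$, which carries the linear defect coming from $(r-R_0-2)^2$ together with the term $-\tfrac{\lambda}{2}\log\bigl(1+(r-R_0-2)^2\bigr)$ with $\lambda>1$, eventually lies below it by a quantity growing linearly in $r$. Consequently, for each bounded range of $s$ there is an $M$ so that the two \emph{ends} of every $\mathcal{W}_R^{s}$ with $R\ge R_0$ lie inside $\mathcal{F}$ over $\{|p|\ge M\}$; in particular each such $\mathcal{W}_R^{s}$ meets $\mathcal{F}$, so no $\mathcal{W}_R^{s}$ equals $\Sigma$. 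Moreover, I claim that for $R=R_0$ and a suitable $s_0=s_0(R_0,\lambda)$ the remaining constants of Definition~\ref{def_funnel} (the $\pi$, $\tfrac{\pi}{2}$, $R_0+2$ and $+1$) are exactly what is needed to fit the \emph{compact} part of $\mathcal{W}_{R_0}$ (neck together with the grim-reaper-to-paraboloid transition) inside $\mathcal{F}$ as well, yielding $\mathcal{W}_{R_0}^{s_0}\subseteq\mathcal{F}$ and hence $\mathcal{W}_{R_0}^{s_0}\cap\Sigma=\emptyset$. This comparison surface is the starting position.

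Now for the \emph{slide}. If $\Sigma\subseteq\{|p|<R_0\}$ then $\Sigma$ is cylindrically bounded, contradicting the known non-existence result that this theorem contains; so $\Sigma$ has a point $q_0$ whose horizontal radius $r_0$ satisfies $r_0\ge R_0$, and — since $\Sigma$ avoids $\mathcal{F}$ — this $q_0$ lies at finite height, strictly below $f_-(r_0)$ or strictly above $f_+(r_0)$. Being disjoint from $\mathcal{W}_{R_0}^{s_0}$, $\Sigma$ lies in one component of $\Reals^{n+1}\setminus\mathcal{W}_{R_0}^{s_0}$. The one-parameter family $\{\mathcal{W}_{R_0}^{s}\}_{s\in\Reals}$ contains a member through $q_0$ (since $\mathcal{W}_{R_0}$ has a point at horizontal radius $r_0$, translate it vertically to the height of $q_0$); slide $s$ monotonically from $s_0$ toward that value. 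By the straddling estimate the ends of $\mathcal{W}_{R_0}^{s}$ remain inside $\mathcal{F}$, hence off $\Sigma$, throughout this finite range; so, using properness of $\Sigma$ and of the comparison surfaces, there is a first parameter $s^{*}$ at which $\mathcal{W}_{R_0}^{s^{*}}$ touches $\Sigma$, and the contact occurs at a point $q^{*}$ of bounded horizontal radius — an interior point of both surfaces, with $\Sigma$ locally on one side of $\mathcal{W}_{R_0}^{s^{*}}$ (it has stayed in one component of the complement up to $s^{*}$). Since $\Sigma$ and $\mathcal{W}_{R_0}^{s^{*}}$ are smooth solutions of the quasilinear elliptic translator equation, tangent at $q^{*}$ with one weakly on one side of the other, the strong maximum principle together with connectedness of $\Sigma$ forces $\Sigma=\mathcal{W}_{R_0}^{s^{*}}$. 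But $\mathcal{W}_{R_0}^{s^{*}}$ meets $\mathcal{F}$ (its ends, by the estimate above), whereas $\Sigma$ does not — a contradiction, proving the theorem.

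I expect the genuinely delicate point to be the straddling/fitting estimate of the second step: establishing $\mathcal{W}_{R_0}^{s_0}\subseteq\mathcal{F}$ requires the precise CSS asymptotic description of \emph{both} sheets of $\mathcal{W}_R$, including quantitative control of where the grim-reaper-like neck turns into the paraboloidal ends, and it is exactly here that the explicit form of $f_{\pm}$ and the hypotheses $n\ge2$, $\lambda>1$ enter — the growth rates in $f_{\pm}$ being the sharp ones for which the ends stay trapped. The remaining subtlety — preventing the ``last point of touching'' from escaping to infinity — is handled, as indicated, by the observation that the ends of every comparison surface in the slide stay inside $\mathcal{F}\subseteq\Reals^{n+1}\setminus\Sigma$, so that any contact must occur at a bounded (interior) point.
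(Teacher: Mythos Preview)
There is a genuine gap: the step ``if $\Sigma\subseteq\{|p|<R_0\}$ then $\Sigma$ is cylindrically bounded, contradicting the known non-existence result that this theorem contains'' is circular --- Corollary~\ref{NoCyls} is a \emph{consequence} of Theorem~\ref{NoFunnels}, not an input to it. And this case cannot be dismissed within your scheme: if $\Sigma\subseteq\{|p|<R_0\}$ then every vertical translate $\mathcal{W}_{R_0}^{s}$ lives over $\{|p|\ge R_0\}$, so your slide never produces a contact point and the argument stalls.

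Apart from this, your approach is sound but organized differently from the paper's. The paper does not slide $\mathcal{W}_{R_0}$ vertically; instead it varies the \emph{aperture}, setting $\tilde R=\sup\{R\le R_0:\Sigma\cap\mathcal{W}_R\neq\emptyset\}$ and using a compactness property of $\bigcup_{R\le R_0}\mathcal{W}_R\setminus\mathcal{F}^{\lambda}_{R_0}$ (Lemma~\ref{compactness}) in place of your straddling estimate to locate the last touching. When no $\mathcal{W}_R$ with $0\le R\le R_0$ meets $\Sigma$, the paper switches to the Altschuler--Wu bowl $\mathcal{W}_0$ and slides \emph{that} vertically: since $\mathcal{W}_0$ is an entire graph over $\Reals^n$, its translates foliate $\Reals^{n+1}$ and must hit $\Sigma$, with the touching again confined to a compact set because the bowl's single end is trapped in $\mathcal{F}$ (this is precisely where $\lambda>1$ is used). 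Your proof is repaired the same way: in the cylindrically bounded sub-case, replace $\mathcal{W}_{R_0}^{s}$ by $\mathcal{W}_0^{s}$ and run the identical slide. Your ``genuinely delicate point'' is correctly identified --- it is exactly Lemma~\ref{in_funnel} ($\mathcal{W}_{R}\subseteq\mathcal{F}^{\lambda}_{R}$), whose proof occupies essentially all of Section~2.
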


We recall that in \cite{CSS07} it was shown that for the two ends (upper and lower branches, $\mathcal{W}^+_{R}$ and $\mathcal{W}^-_{R}$, respectively) of the winglike hypersurface $\mathcal{W}_{R}$, the graphs $(p,u^{\pm}_R(p))\in\Reals^{n+1}$ representing them (asymptotically in the exterior of large balls) satisfy:

\beq\label{CSS_asympt}
u^\pm_R(p)=\frac{|p|^2}{2(n-1)}-\ln |p|+C_{\pm}(R)+O(|p|^{-1}),\quad p\in\Reals^n,
\eeq
where the asymptotics refer to $|p|\to\infty$, and where the constants $C_{\pm}(R)$ are not explicitly known or determined. Definition \ref{def_funnel} is thus best thought of as an effective formulation of these asymptotics (and see Lemma \ref{in_funnel} below, which relates the definition to (\ref{CSS_asympt})), with the $R$-dependence of the separation of the ends furthermore made explicit. The two main reasons the funnel condition allows the analysis we have in mind are then: (1) Each funnel contains the respective winglike solution (Lemma \ref{in_funnel}), and (2) The family of funnels (in $R$, for fixed $y_0$ and $\lambda$) satisfies a compactness property for the additional set traced out as the aperture decreases in size (Lemma \ref{compactness}).

As an immediate corollary of Theorem \ref{NoFunnels}, we get the following:

\begin{corollary}\label{NoCyls}
Let $\Sigma^n\subseteq\Reals^{n+1}$ ($n\geq 2)$ be a smooth, complete, properly embedded, $e_{n+1}$-self-translating hypersurface,
\beq
H_{\Sigma}=\langle e_{n+1},\nu_\Sigma\rangle.
\eeq
Then $\Sigma$ cannot have an $e_{n+1}$-directed end contained in a generalized half-cylinder, while the $-e_{n-1}$-directed end is contained below the corresponding horizontal hyperplane. That is, $\Sigma^n$ cannot be contained in any subset of the form (for $h_0\in\Reals)$
\[
S_h=\{x_{n+1}\leq h_0\}\cup\Big(\Omega^n\times(h_0,\infty)\Big),
\]
where $\Omega^n\subseteq\Reals^n$ is a bounded open domain in the $x_1x_2\ldots x_n$-hyperplane.
\end{corollary}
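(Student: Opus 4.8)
The plan is to derive the corollary directly from Theorem \ref{NoFunnels} by checking that the complement $\Reals^{n+1}\setminus S_h$ of any such set $S_h$ always contains an $e_{n+1}$-directed solid parabolic funnel. Granting this, if $\Sigma$ were contained in $S_h$, then $\Reals^{n+1}\setminus\Sigma\supseteq\Reals^{n+1}\setminus S_h$ would contain such a funnel, contradicting Theorem \ref{NoFunnels}.

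First I would compute the complement of $S_h$. Since $S_h=\{x_{n+1}\le h_0\}\cup(\Omega^n\times(h_0,\infty))$, a point $(p,x_{n+1})$ with $|p|=\sqrt{x_1^2+\cdots+x_n^2}$ lies outside $S_h$ if and only if $x_{n+1}>h_0$ and $p\notin\Omega^n$; hence
\[
\Reals^{n+1}\setminus S_h=(\Reals^n\setminus\Omega^n)\times(h_0,\infty).
\]
Because $\Omega^n$ is bounded, its closure sits inside some open ball $B_{R_0}(0)\subseteq\Reals^n$, so $\Reals^n\setminus\Omega^n\supseteq\{|p|\ge R_0\}$ and therefore $\Reals^{n+1}\setminus S_h\supseteq\{(x_1,\ldots,x_{n+1}):|p|\ge R_0,\ x_{n+1}>h_0\}$.

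Next I would fit a funnel into this exterior region. Fix $\lambda=2$ (any value $>1$ works) and keep the same $R_0$. The function $f_-$ of Definition \ref{def_funnel} is continuous on $[R_0,\infty)$ and $f_-(r)\to\infty$ as $r\to\infty$, so $m:=\min_{r\ge R_0}f_-(r)$ is finite. Choose $c\in\Reals$ with $c>h_0-m$ and set $y_0:=(0,\ldots,0,c)$. Every point of $\mathcal{F}^\lambda_{R_0,y_0}$ then has horizontal part satisfying $|p|\ge R_0$ and last coordinate satisfying $x_{n+1}\ge f_-(|p|)+c\ge m+c>h_0$, so $\mathcal{F}^\lambda_{R_0,y_0}\subseteq\{|p|\ge R_0,\ x_{n+1}>h_0\}\subseteq\Reals^{n+1}\setminus S_h$. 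If $\Sigma\subseteq S_h$, this forces $\mathcal{F}^\lambda_{R_0,y_0}\subseteq\Reals^{n+1}\setminus\Sigma$, contradicting Theorem \ref{NoFunnels}.

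There is no genuinely hard step: the entire analytic content is carried by Theorem \ref{NoFunnels}. The only points needing a line of care are the open/closed bookkeeping in passing from $\Omega^n$ to a ball complement, and the elementary remark that $f_-$ is bounded below on $[R_0,\infty)$, which is exactly what allows the vertical translation by $y_0$ that lifts the funnel out of the slab $\{x_{n+1}\le h_0\}$.
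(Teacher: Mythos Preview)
Your proof is correct and is exactly the argument the paper has in mind: the paper states this result as ``an immediate corollary'' of Theorem~\ref{NoFunnels} without further detail, and your construction---enclosing $\Omega^n$ in a ball of radius $R_0$ and then translating a funnel $\mathcal{F}^\lambda_{R_0,y_0}$ sufficiently far in the $e_{n+1}$-direction so that it sits inside $(\Reals^n\setminus\Omega^n)\times(h_0,\infty)$---is precisely how one makes that immediacy explicit.
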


Weakening the assumptions further, one obtains:
\begin{corollary}\label{NoCyls}
Let $\Sigma^n\subseteq\Reals^{n+1}$ ($n\geq 2)$ be a smooth, complete, properly embedded, $e_{n+1}$-self-translating hypersurface,
\beq
H_{\Sigma}=\langle e_{n+1},\nu_\Sigma\rangle.
\eeq
Then $\Sigma$ cannot be contained in a generalized $e_{n+1}$-directed cylinder $\Omega^n\times\Reals$, where $\Omega^n\subseteq\Reals^n$ is a bounded open domain in the $x_1x_2\ldots x_n$-hyperplane.
\end{corollary}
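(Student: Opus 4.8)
The plan is to reduce the statement directly to Theorem~\ref{NoFunnels}: I will show that if a self-translater $\Sigma$ is confined to a cylinder over a bounded base, then its complement necessarily contains a solid parabolic funnel, which is exactly what Theorem~\ref{NoFunnels} forbids. So I would argue by contradiction and suppose $\Sigma^n\subseteq\Omega^n\times\Reals$ for some bounded open set $\Omega^n\subseteq\Reals^n$, noting that the hypotheses on $\Sigma$ here (smooth, complete, properly embedded, $e_{n+1}$-self-translating, $n\geq 2$) are precisely those of Theorem~\ref{NoFunnels}.

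The one substantive step is to fit a funnel into $\Reals^{n+1}\setminus\Sigma$. Since $\Omega^n$ is bounded, $\overline{\Omega^n}$ is compact, so I can choose a point $p_0\in\Reals^n$ and a radius $R_0>0$ with $\overline{\Omega^n}\subseteq B^n_{R_0}(p_0)$. I then set $y_0:=(p_0,0)\in\Reals^{n+1}$, pick any admissible logarithmic parameter (say $\lambda=2>1$), and consider $\mathcal{F}^\lambda_{R_0,y_0}$. By Definition~\ref{def_funnel}, every point of $\mathcal{F}^\lambda_{R_0,y_0}$ has horizontal part $p$ satisfying $|p-p_0|\geq R_0$ --- this is just the built-in constraint $|p|\geq R_0$ after translation by $y_0$. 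Such a $p$ does not lie in $B^n_{R_0}(p_0)$, hence not in $\overline{\Omega^n}$, so the point is not in $\Omega^n\times\Reals$ and therefore not on $\Sigma$. Thus $\mathcal{F}^\lambda_{R_0,y_0}\subseteq\Reals^{n+1}\setminus\Sigma^n$, contradicting Theorem~\ref{NoFunnels} and proving the claim. (An equally quick route: $\Omega^n\times\Reals$ is contained in the region $S_h$ of the preceding corollary for every $h_0\in\Reals$, so the statement is also an immediate special case of that corollary.)

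I do not expect a genuine obstacle here, since all the analytic work is already absorbed into Theorem~\ref{NoFunnels}; this corollary is a packaging statement. The only point needing verification is the elementary geometric inclusion that a funnel of aperture $R_0$ has horizontal footprint exactly $\{\,|p-p_0|\geq R_0\,\}$, which is immediate from the defining inequality $|p|\geq R_0$ in Definition~\ref{def_funnel}; the single idea in play is that a bounded cross-section always leaves room, far out in the $p$-directions, for a funnel of sufficiently large aperture, regardless of the behavior of its vertical profile $[f_-,f_+]$.
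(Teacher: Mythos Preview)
Your proposal is correct and matches the paper's approach: the paper gives no explicit proof, presenting the statement as an immediate corollary of Theorem~\ref{NoFunnels} (and, via ``weakening the assumptions further,'' of the preceding half-cylinder corollary). Your argument spells out precisely the intended reduction---choose $R_0$ large enough that the cylinder sits inside $\{|p-p_0|<R_0\}$, then the funnel $\mathcal{F}^\lambda_{R_0,y_0}$ lies in the complement by the built-in constraint $|p|\geq R_0$ in Definition~\ref{def_funnel}---and your parenthetical remark about going through the half-cylinder corollary is exactly the alternative the paper's phrasing suggests.
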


Finally, one concludes also the graph case considered in \cite{Sh13}, using there the method of limiting minimal hypersurfaces:

\begin{corollary}[Theorem 1.2.7 in \cite{Sh13}]
For $n\geq 2$, there is no complete translating graph $\Sigma \subseteq \Reals^{n+1}$ over a smooth bounded connected domain $\Omega\subseteq\Reals^n$.
\end{corollary}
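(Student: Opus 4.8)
The plan is to deduce this from the preceding corollary, by realizing such a graph inside a generalized $e_{n+1}$-directed cylinder over a bounded domain. Write $\Sigma=\{(x,u(x)):x\in\Omega\}$, where $u$ is the height function of the translating graph (taken, as throughout, to be $e_{n+1}$-directed); by elliptic regularity $u\in C^\infty(\Omega)$, and $\Sigma$ is connected since $\Omega$ is. Projecting onto the first $n$ coordinates carries $\Sigma$ into $\Omega$, so $\Sigma\subseteq\Omega\times\Reals$, and since $\Omega$ is a bounded open domain this is precisely a generalized $e_{n+1}$-directed cylinder of the type in the previous corollary.

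The only hypothesis of that corollary left to verify is that $\Sigma$ is properly embedded in $\Reals^{n+1}$. Embeddedness is automatic for a graph, and properness --- closedness of $\Sigma$ in $\Reals^{n+1}$ --- can fail only if there is a point $\bar x\in\partial\Omega$ and points $x_k\to\bar x$ in $\Omega$ along which $u(x_k)$ stays bounded. I would exclude this as in \cite{Sh13}: completeness of a translating graph over a bounded domain precludes bounded behaviour near $\partial\Omega$. The relevant comparison functions are available --- vertical hyperplanes are exact translators, round vertical cylinders $\partial B^n_\rho\times\Reals$ are strict supersolutions, and horizontal hyperplanes are strict sub-/super-solutions of the translator graph equation --- and, combined with the interior gradient estimate for translating graphs and the maximum principle, they force a bounded $u$ to extend to a solution with finite boundary data on a portion of $\partial\Omega$; then a subregion of $\Sigma$ would have finite diameter in the induced metric, contradicting completeness. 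With such sequences ruled out, any sequence in $\Sigma$ converging in $\Reals^{n+1}$ has its base points in a compact subset of $\Omega$, hence converges in $\Sigma$; so $\Sigma$ is properly embedded.

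Thus $\Sigma$ is a smooth, complete, properly embedded, $e_{n+1}$-self-translating hypersurface contained in the bounded cylinder $\Omega\times\Reals$, and the preceding corollary applies verbatim to produce a contradiction; hence no such graph exists. I expect the only real work to be the properness/completeness bookkeeping just sketched: the genuinely geometric input --- the ``last point of touching'' maximum-principle argument against the solid parabolic funnels of Theorem \ref{NoFunnels} --- has already been spent on the earlier corollaries, so the present statement is essentially a packaging matter.
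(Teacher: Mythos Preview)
Your reduction to the preceding cylinder corollary is exactly the deduction the paper intends; the paper offers no explicit proof here, merely listing the statement as a consequence of Corollary~\ref{NoCyls}. So at the level of strategy you are aligned with the paper, and in fact you go further: you correctly flag that the cylinder corollary requires \emph{proper} embeddedness, whereas ``complete translating graph over a bounded $\Omega$'' does not a priori supply this. The paper simply glosses over that point.

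Your sketch of the properness step is the right shape but is genuinely only a sketch. Boundedness of $u(x_k)$ along a \emph{single} sequence $x_k\to\bar x\in\partial\Omega$ does not immediately feed into an interior gradient estimate, which needs a two-sided bound on $u$ over a full ball; you would first have to upgrade sequential boundedness to boundedness on a one-sided neighbourhood of $\bar x$ in $\Omega$ (this is where the barrier comparisons you mention --- tilted grim reapers or half-cylinders as in \cite{Sh13} --- actually do work). Once that is in hand, the gradient estimate gives a Lipschitz bound near $\bar x$, hence a finite-length curve in $\Sigma$ escaping to $(\bar x,L)$, contradicting completeness. If you want the argument to be self-contained rather than an appeal to \cite{Sh13}, that upgrade step is the one place where real work remains; otherwise the packaging you describe is correct.
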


It should be noted that there do exist many examples of complete, properly embedded self-translaters which do not satisfy the funnel condition: Every vertical hyperplane containing the vector $e_{n+1}$ constitutes an example in $\Reals^{n+1}$, as do in $\Reals^3$ the desingularized unions of vertical $2$-planes and grim reaper cylinders $\Sigma^2=\Gamma\times\Reals$ in the work of X.H. Nguyen (see \cite{Ng09}-\cite{Ng12}).

I thank Hojoo Lee, Hyunsuk Kang and Jaigyoung Choe for their hospitality at the Korean Institute for Advanced Study, in August 2013, where most of this work was carried out, and particularly to Hojoo Lee for discussions where he pointed out the obstruction question in reference to L. Shahriyari's work. After this work was completed, the author learned that some special cases (e.g. the cylindrically bounded, not necessarily graphical case, which in the present paper is Corollary \ref{NoCyls}), were already independently described in the interesting recent paper \cite{MSS14}.

\section{Estimates on the winglike solutions}
In this section, we prove the following, where $\mathcal{W}_R$ refers to the Clutterbuck-Schn\"u{}rer-Schulze two-ended winglike solution which intersects the hyperplane $\{x_{n+1}=0\}$ perpendicularly at the sphere $x_1^2+x_2^2+\ldots x_n^2 = R^2$, i.e. the union of the upper respectively lower branches $W^{+}_R$ and $W^{-}_R$ constructed in Lemma 2.3 in \cite{CSS07}.

\begin{lemma}\label{in_funnel}
The winglike solution $\mathcal{W}_R$ is contained in the solid double funnel $\mathcal{F}^\lambda_R=\mathcal{F}^\lambda_{R,0}$ ($R\geq 0$ and $\lambda \in [0,1]$).
\end{lemma}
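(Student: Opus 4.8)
The plan is to compare the generating curve of $\mathcal{W}_R$ directly against the two boundary profiles $f_\pm$ in Definition \ref{def_funnel}, separating the argument into the upper branch $W_R^+$ and the lower branch $W_R^-$, and handling separately the ``core'' region near the neck (where $|p|$ is close to $R$) and the asymptotic region $|p|\to\infty$. The key input is the convexity analysis from \cite{CSS07}: the upper branch $W_R^+$ is a graph $x_{n+1}=u_R^+(|p|)$ over $\{|p|\ge R\}$ which is convex, increasing, vanishes with vertical tangent at $|p|=R$, and obeys the asymptotics (\ref{CSS_asympt}); and $W_R^-$ is the reflection-type lower branch with its own graph $u_R^-$. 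So the statement $\mathcal{W}_R\subseteq\mathcal{F}^\lambda_R$ reduces to the two pointwise inequalities $u_R^+(r)\le f_+(r)$ and $u_R^-(r)\ge f_-(r)$ for all $r\ge R$.

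First I would establish the upper bound $u_R^+(r)\le f_+(r)=\tfrac{r^2}{2(n-1)}+1$. The natural tool is the differential inequality satisfied by a convex rotationally symmetric translater graph: writing the translater equation (\ref{TransEq}) in graphical form over the radial variable, one gets an ODE for $u=u_R^+$ of the form $u'' + (n-1)\tfrac{u'}{r}\cdot\tfrac{1}{1+(u')^2}=\sqrt{1+(u')^2}$ (schematically), and comparing with the exact translater-like barrier $\tfrac{r^2}{2(n-1)}$ — which is the leading term of the bowl soliton / the $R\to 0$ profile — one shows $u_R^+$ lies below the barrier shifted up by the constant $1$. Concretely, I would use that $v(r):=\tfrac{r^2}{2(n-1)}$ is a subsolution (it grows like a translater to leading order but its curvature is slightly too small), observe $u_R^+(R)=0< v(R)+1$, and run a standard ODE comparison / sliding argument in $r$: if $u_R^+$ ever crossed $v+1$ from below there would be a first contact point where $u_R^+=v+1$, $(u_R^+)'=v'$, $(u_R^+)''\le v''$, contradicting the two ODE/inequalities; the asymptotics (\ref{CSS_asympt}) guarantee $u_R^+ - v \to -\ln r \to -\infty$, so no crossing can occur at infinity either. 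The constant $+1$ is exactly the slack one needs to absorb the neck behavior and the lower-order $-\ln|p|$ correction.

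Next, the lower bound $u_R^-(r)\ge f_-(r)$. Here the shape of $f_-$ is dictated by (\ref{CSS_asympt}): $f_-$ has the same $\tfrac{r^2}{2(n-1)}$ leading term, a $-\lambda\tfrac{\log(1+\cdots)}{2}$ term designed to dominate the $-\ln|p|$ correction for all $\lambda\in[0,1]$ (since $-\lambda\log\sqrt{1+t^2}\le -\ln|p|$ once the arguments are aligned and $\lambda$ is at most $1$ — this is where the restriction $\lambda\le 1$ in the lemma is used), a recentering by $R+2$ in the argument, and a final negative constant $-\tfrac{\pi(R_0+\pi/2)+4}{2(n-1)}$. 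The term $\pi(R+\tfrac\pi2)$ is the length/height budget for the neck of $\mathcal{W}_R$: near $|p|=R$ the generating curve of the CSS soliton looks like a shifted grim-reaper arc (this is explicit in \cite{CSS07}), and one needs a crude but explicit bound on how far below zero the lower branch can dip and how much horizontal extent the neck occupies before the graph $u_R^-$ becomes a well-behaved radial graph to which the asymptotic estimate applies. So the proof splits: (i) for $r$ in the neck region $[R,R+2]$, bound $u_R^-(r)\ge -(\text{neck height})\ge -\tfrac{\pi(R+\pi/2)+4}{2(n-1)}$ directly from the grim-reaper comparison, noting $f_-(r)$ is even smaller there because its quadratic and log terms are both $\le$ small; (ii) for $r\ge R+2$, use (\ref{CSS_asympt}) together with the convexity of the lower branch, comparing against the explicit $f_-$ with the shifted argument $r-R-2$.

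The main obstacle I anticipate is step (i): making the neck estimate \emph{effective and uniform in $R$} with the clean constant $\pi(R_0+\tfrac\pi2)+4$. The asymptotic expansion (\ref{CSS_asympt}) is qualitative and its constant $C_-(R)$ is, as the text stresses, not explicitly known, so one cannot simply read off the lower bound from it; instead one must extract a genuinely quantitative barrier from the ODE for the CSS generating curve near the neck, where the curve turns through a half-turn (hence the $\pi$, from the arclength of a unit half-circle, plus a $\pi/2\cdot$(radius)-type term for the turning at radius $\approx R$) and then transitions to the paraboloidal regime. Packaging the transition from arclength parametrization (natural near the vertical-tangent neck) to radial-graph parametrization (natural in the asymptotic regime), and checking that the shift $R+2$ in $f_-$'s argument really does buy enough room, is the delicate bookkeeping; everything else is a routine convexity/ODE comparison. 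I would also double-check the edge case $\lambda=0$ (allowed by the lemma though not by Definition \ref{def_funnel}'s $\lambda>1$, so presumably only the inclusion, not the funnel's defining properties, is claimed there) to make sure the log term dropping out still leaves $f_-$ below $u_R^-$ — it does, because then one is comparing against a pure paraboloid minus a constant, and the $-\ln|p|$ term in (\ref{CSS_asympt}) only helps.
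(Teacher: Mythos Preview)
Your outline captures the right decomposition (upper versus lower branch, neck versus asymptotic region), and your upper-bound sketch is close in spirit to what the paper does, though the paper avoids the vertical-tangent issue at $r=R$ by passing to the inverse function $r(V)$ and integrating the differential inequality $(r^2)'\geq 2(n-1)(1-e^{-x})$ directly to obtain (\ref{up_funnel}). The lower bound, however, has a real gap that you yourself flag but do not close: step (ii) proposes to ``use (\ref{CSS_asympt}) together with convexity'' on $r\geq R+2$, but (\ref{CSS_asympt}) carries the unknown constant $C_-(R)$ and an unspecified $O(|p|^{-1})$ remainder, so it cannot produce an effective pointwise inequality $u_R^-\geq f_-$ for finite $r$. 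Convexity alone does not rescue this, because you have no quantitative anchor for the lower branch beyond the neck.

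The paper supplies exactly the two ingredients your proposal is missing. For the neck, rather than a grim-reaper comparison, one integrates $(\arctan\varphi)'=1-\tfrac{n-1}{r}\varphi$ between the vertical tangent at $R$ and the horizontal tangent at some $R_*$ to get $R_*\leq R+\tfrac{\pi}{2}$, and similarly integrates $(\arctan\eta')'$ in the height variable to get the neck depth $d\leq \tfrac{\pi}{2}\tfrac{R+\pi/2}{n-1}$; this is where the constant $\tfrac{\pi(R+\pi/2)}{2(n-1)}$ in $f_-$ actually comes from. For the asymptotic region, the paper does not appeal to (\ref{CSS_asympt}) at all but instead writes down the explicit candidate $\tau_{R_*}(r)=\tfrac{(r-R_*)^2}{2(n-1)}-\tfrac{1}{2}\log(1+(r-R_*)^2)$ and verifies \emph{by a direct polynomial computation} (Lemma \ref{sub_sols} and the $R_*\geq 2$ variant) that $\Psi(\tau_{R_*})\leq 0$ on $[R_*,\infty)$, so that $\tau_{R_*}$ is a genuine subsolution matching the initial conditions $\tau_{R_*}(R_*)=\tau_{R_*}'(R_*)=0$. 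The shift by $R_0+2$ in $f_-$ is there precisely to guarantee $R_*\geq 2$ after the $\arctan$ bound. This subsolution construction is the substantive missing idea in your proposal.
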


\subsection{Global bounds via monotonicity identities for the ODE}
For the required bounds, one can start with a simple estimation method which has also proven useful previously (to the author and S. Kleene in \cite{KM}, for the self-shrinking soliton equation). It is on one hand the standard idea, to obtain bounds for the solutions of an elliptic nonlinear equation, of "freezing" the nonlinearities in a suitable way, but in the current context of solitons (and as only became clear to us later, when seeing the treatment in \cite{GGS10}, p. 123) such a transformation is best understood as a form of (elliptic, rotationally symmetric) cousin of the (parabolic, general) monotonicity identity first found by Huisken, where the Gaussian density or heat kernel naturally appears from solving the linearized "skeleton" of the (elliptic or parabolic) equation.

Consider the equation for the derivative $\varphi(r)=V'(r)$ of graphs which generate by rotation a self-translater (see \cite{CSS07} and \cite{AW94}):
\beq\label{phiODE}
\varphi'=(1+\varphi^2)\left(1-\fracsm{n-1}{r}\varphi\right).
\eeq

Inspired by the expected asymptotics (i.e. Definition \ref{def_funnel}, or Equation (\ref{CSS_asympt})), we rewrite as
\beq
\varphi'=\frac{1+\varphi^2}{r^2}\left(r^2-(n-1)r\varphi\right),
\eeq
and freezing the nonlinearity $g(r)=\frac{1+(\varphi(r))^2}{r^2}$, solve the linear equation
\beq
\phi' + (n-1)rg\varphi = r^2g,
\eeq
for $\varphi$. This gives the integral (monotonicity) identity
\beq\label{intident}
\begin{split}
\varphi(r) &= e^{-(n-1)\int_{r_0}^r\fracsm{1+(\varphi(s))^2}{s}ds}\left[\int_{r_0}^r e^{(n-1)\int_{r_0}^t\fracsm{1+(\varphi(s))^2}{s}ds}(1+(\varphi(t))^2)dt+\varphi(r_0)\right],\\
&=\int_{r_0}^r e^{-(n-1)\int_t^r\fracsm{1+\varphi^2}{s}ds}(1+\varphi^2)dt+e^{-(n-1)\int_{r_0}^r\fracsm{1+\varphi^2}{s}ds}\varphi(r_0).
\end{split}
\eeq

As a simple first consequence of an inherent monotonicity, we see that (since $1=t/t\leq r/t$ for $t\in [r_0,r]$):
\beq
\begin{split}
\varphi(r) &\leq e^{-(n-1)\int_{r_0}^r\fracsm{1+(\varphi(s))^2}{s}ds}\left[r\int_{r_0}^r e^{(n-1)\int_{r_0}^t\fracsm{1+(\varphi(s))^2}{s}ds}\frac{1+(\varphi(t))^2}{t}dt+\varphi(r_0)\right]\\
& = \frac{r}{n-1}+\left[\varphi(r_0)-\frac{r}{n-1}\right]e^{-(n-1)\int_{r_0}^r\fracsm{1+(\varphi(s))^2}{s}ds}\\
& = \frac{r}{n-1} - \frac{r_0\varphi'(r_0)}{n-1}e^{-(n-1)\int_{r_0}^r\fracsm{1+(\varphi(s))^2}{s}ds}.
\end{split}
\eeq

This, together with (\ref{phiODE}), gives when $r_0>0$ that under the condition $\varphi'(r_0)\geq 0$, which the natural assumption $\varphi(r_0)\leq 0$ for example guarantees, the global bounds:
\begin{align}
&\varphi(r_0)\leq\varphi(r)\leq\frac{r}{n-1}, \quad\mathrm{for}\quad r\in [r_0,\infty),\label{phi_up_bound}\\
&\varphi'(r)\geq 0, \quad\mathrm{for}\quad r\in [r_0,\infty)
\end{align}

(Also, dividing by $1+\varphi^2$ in (\ref{phiODE}) and integrating:
\[
\frac{\pi}{2}\geq \arctan\varphi (r) = \int_r^R\left[1-\fracsm{n-1}{r}\varphi\right].
\]
Hence, since $1-\fracsm{n-1}{r}\varphi>0$, we must on a subsequence have $\varphi(r_n)/r_n\to 1$.)

We now consider a given $r_0=R_*$, so that $\varphi(R_*)=0$ (and so (\ref{phi_up_bound}) holds).

By partial integration, we get:
\begin{align}\label{partial_first}
\begin{split}
\varphi(r) &= \frac{1}{n-1}\left[r-R_*e^{-(n-1)\int_{R_*}^r\fracsm{1+(\varphi(s))^2}{s}ds}\right]\\
&\quad - \frac{1}{n-1}\int_{R_*}^r e^{-(n-1)\int_{t}^r\fracsm{1+(\varphi(s))^2}{s}ds}dt.
\end{split}
\end{align}

We apply Cauchy-Schwarz to see:
\begin{align*}
\int_{R_*}^r e^{(n-1)\int_{R_*}^t\fracsm{1+(\varphi(s))^2}{s}ds}dt&=\int_{R_*}^r e^{(n-1)\int_{R_*}^t\fracsm{1+(\varphi(s))^2}{s}ds}\sqrt{\frac{1+\varphi^2}{t}}\sqrt{\frac{t}{1+\varphi^2}}dt\\
&\leq \left(\int_{R_*}^r e^{2(n-1)\int_{R_*}^t\fracsm{1+(\varphi(s))^2}{s}ds}\frac{1+\varphi^2}{t}dt\right)^{\fracsm{1}{2}}\left(\int_{R_*}^r\frac{t}{1+\varphi^2}\right)^{\fracsm{1}{2}}\\
&= \frac{1}{\sqrt{2(n-1)}}\left(\int_{R_*}^r\frac{t}{1+\varphi^2}\right)^{\fracsm{1}{2}}\left[e^{2(n-1)\int_{R_*}^r\fracsm{1+(\varphi(s))^2}{s}ds}-1\right]^{\fracsm{1}{2}}\\
&\leq \frac{1}{\sqrt{2(n-1)}}\left(\int_{R_*}^r\frac{t}{1+\varphi^2}\right)^{\fracsm{1}{2}}e^{(n-1)\int_{R_*}^r\fracsm{1+(\varphi(s))^2}{s}ds}.
\end{align*}

All in all:
\beq\label{CauchyS}
\varphi(r)\geq \frac{r-R_*}{n-1}- \sqrt{\frac{n-1}{2}}\left(\int_{R_*}^r\frac{t}{1+\varphi^2}\right)^{\fracsm{1}{2}}.
\eeq

Thus, already by $\varphi^2\geq 0$ we generate an improved, non-trivial lower bound:
\beq\label{first_low_bound}
\varphi(r)\geq \frac{r-R_*}{n-1}-\sqrt{\frac{r^2-R_*^2}{2(n-1)}}\geq \alpha_n\frac{r-R_*}{n-1}\geq \frac{r-R_*}{4(n-1)},\qquad \alpha_n=1-\fracsm{1}{\sqrt{2(n-1)}},
\eeq

for $n\geq 2$, or when $n\geq 3$:
\[
\varphi(r)\geq\frac{r-R_*}{2(n-1)}.
\]

By integration, this implies
\[
V(r)\geq \frac{\alpha_n}{2(n-1)}(r-R_*)^2,\qquad r\geq R_*.
\]
Using (\ref{first_low_bound}) in (\ref{CauchyS}) yields, with $\beta_n=\alpha_n/(n-1)$:
\begin{align*}
\int_{R_*}^r\frac{t}{1+\varphi^2}&\leq\frac{\log\left(1+\beta_n^2(r-R_*)^2\right)}{2\beta_n^2}+\frac{R_*}{\beta_n}\arctan(\beta_n(r-R_*)),
\end{align*}

so that
\beq
\varphi(r)\geq \frac{r-R_*}{n-1}- \sqrt{\frac{n-1}{2}}\left(\frac{\log\left(1+\beta_n^2(r-R_*)^2\right)}{2\beta_n^2}+\frac{\pi R_*}{2\beta_n}\right)^{1/2}.
\eeq

Another consequence of (\ref{first_low_bound}) is:
\begin{align*}
\int_{R_*}^r e^{-(n-1)\int_t^r\fracsm{1+(\varphi(s))^2}{s}ds}dt&\leq \int_{R_*}^r e^{-\frac{\alpha_n^2}{n-1}\left[\frac{r^2-t^2}{2}-2R_*(r-t)+R_*\log(\frac{r}{t})\right]}\left(\fracsm{t}{r}\right)^{n-1} dt\\
&\leq 3R_* + \int_{4R_*}^r e^{-\frac{\alpha_n^2}{n-1}(r-t)\left[\frac{r+t}{2}-2R_*\right]}dt\\
&\leq 3R_* + e^{-\frac{\alpha_n^2}{2(n-1)}r^2}\int_{4R_*}^r e^{\frac{\alpha_n^2}{2(n-1)}rt}dt\\
&\leq 3R_* + \left(1-e^{-\frac{\alpha_n^2}{2(n-1)}(r^2 - 4R_*^2)}\right)\frac{1}{r}.
\end{align*}
where we note that while the splitting in the intermediate step is only valid when $r\geq 4R_*$, the contribution is otherwise negative so the final quantity is always an upper bound. Using this estimate with (\ref{partial_first}) gives another lower bound:
\beq\label{phi_refined}
\varphi(r)\geq \frac{r-R_*}{n-1}-\frac{2}{\alpha_n^2}\frac{1}{r}-\frac{3R_*}{n-1}\geq \frac{r-4R_*}{n-1}-\frac{24}{r},
\eeq
where various improvements, but which we will not directly need, could be made (f.ex. the $1/r$ term can be excluded when $r\leq 4R_*$).

Also, one could include the final integration in the above (and split at $t= 4R_*$), to similarly see
\begin{align*}
\int_{R_*}^r\int_{R_*}^u e^{-(n-1)\int_t^u\fracsm{1+(\varphi(s))^2}{s}ds}dtdu&\leq \int_{R_*}^r\int_{R_*}^u e^{-\frac{\alpha_n^2}{n-1}\left[\frac{u^2-t^2}{2}-2R_*(u-t)+R_*\log(\frac{u}{t})\right]}\left(\fracsm{t}{u}\right)^{n-1} dtdu\\
&\leq \fracsm{9}{2}R_*^2 + \int_{R_*}^r\int_{4R_*}^u e^{-\frac{\alpha_n^2}{n-1}(u-t)\left[\frac{u+t}{2}-2R_*\right]}dtdu\\
&\leq \fracsm{9}{2}R_*^2 + \int_{R_*}^r\int_{4R_*}^u e^{\frac{\alpha_n^2}{2(n-1)}(ut-u^2)}dtdu\\
&\leq \fracsm{9}{2}R_*^2 + \frac{2(n-1)}{\alpha_n^2}\log\left(\fracsm{r}{R_*}\right).
\end{align*}

Thus one obtains another global bound for $V$ (where now the leading coefficient is sharp):
\beq
\begin{split}
V(r) &\geq \frac{(r-R_*)^2}{2(n-1)}-\frac{2}{\alpha_n^2}\log\left(\fracsm{r}{R_*}\right)-\frac{9R_*^2}{2(n-1)}\\
&\geq \frac{(r-R_*)^2}{2(n-1)}-24\log\left(\fracsm{r}{R_*}\right)-\frac{9R_*^2}{2(n-1)}.
\end{split}
\eeq

Also, by (\ref{first_low_bound})
\[
\sup_{r\in [R_*,\infty)}\frac{r}{1+\varphi^2}\leq \sup_{r\in [R_*,\infty)}\frac{r}{1+\alpha_n^2\frac{(r-R_*)^2}{(n-1)^2}}=\frac{\sqrt{\fracsm{(n-1)^2}{\alpha_n^2}+R_*^2}}{1+\fracsm{\alpha_n^2}{(n-1)^2}\left[\sqrt{\fracsm{(n-1)^2}{\alpha_n^2}+R_*^2}-R_*\right]^2}\leq R_* + 1,
\]
for all $n\geq 2$ and $R_*\geq 0$, a fact which we will not directly need, but it is interesting to note how the global bound is insensitive to the precise value of $\alpha_n$.

One could proceed to iterate the monotonicity formula in this manner, to obtain the sharp coefficients on the higher order terms, by doing further partial integration in (\ref{partial_first})
\begin{align*}
\int_{R_*}^r e^{-(n-1)\int_{t}^r\fracsm{1+(\varphi(s))^2}{s}ds}dt&=\frac{1}{n-1}\left[\frac{r}{1+\varphi^2}-R_*e^{-(n-1)\int_{R_*}^r\fracsm{1+(\varphi(s))^2}{s}ds}\right]\\
&-\frac{1}{n-1}\int_{R_*}^r e^{-(n-1)\int_{t}^r\fracsm{1+(\varphi(s))^2}{s}ds}\frac{1-2\varphi t(1-\fracsm{n-1}{t}\varphi)}{1+\varphi^2}dt,
\end{align*}
and estimating the terms using the previous bounds. For such an estimate, in the scope of the current analysis, however, it is more convenient to simply view the bounds generated above by the monotonicity formula as a recipe for writing down explicit subsolutions which are matched at the initial condition as well as asymptotically along the end (see Section \ref{sec_subsol} below).

For the upper branch of the winglike solution $\mathcal{W}_{R}$, we now consider the equation for $r(V)$,
\beq
\frac{r''}{1+(r')^2}+r'=\frac{n-1}{r},
\eeq
or equivalently:

\beq
r''+(1+(r')^2)r'=(n-1)\frac{(1+(r')^2)}{r}.
\eeq

Freezing both nonlinearities, we view it as the equation $r''+ h(x)r'=k(x)$, so that one particular solution to the homogeneous solution is $r_1(x)=1$, and another one $r_2(x)=\int_0^x e^{-\int_0^t (1+(r'(s))^2)ds}dt$. The general solution being
\[
r(x)= C_1 + C_2r_2(x) + (n-1)\int_0^x e^{-\int_0^t (1 + (r')^2)ds}\left[\int_0^t e^{\int_0^u (1+(r')^2)ds}\fracsm{(1+(r')^2)}{r}du\right] dt,
\]
we obtain for $r(0)=R$ and $r'(0)=0$ the formula:
\beq
r(x)= R + (n-1)\int_0^x\int_0^t e^{-\int_u^t (1+(r')^2)ds}\fracsm{(1+(r')^2)}{r}\:du\:dt,
\eeq

\beq\label{rprimeq}
r'(x) = (n-1)\int_0^x e^{-\int_u^x (1+(r')^2)ds}\fracsm{(1+(r')^2)}{r}\:du
\eeq

This firstly means $r'(x)\geq0$ for $x\in[0,\infty)$, so that one may further estimate using $1/r(u)\geq 1/r(x)$, when $x\geq u$, to get:
\beq
r'(x)\geq \frac{n-1}{r(x)}\left[1-e^{-\int_0^x (1+(r')^2)ds}\right]\geq\frac{n-1}{r(x)}\left(1-e^{-x}\right).
\eeq

Integrating the inequality $(r^2)'\geq 2(n-1)(1-e^{-x})$, we get
\[
r(x)\geq \sqrt{2(n-1)(x-1+e^{-x})+R^2},
\]
so that in consequence we have the bounds on the upper branch
\beq\label{up_funnel}
V(r)\leq \frac{r^2-R^2}{2(n-1)} + 1,
\eeq
which are the bounds built into the funnel condition, Definition \ref{def_funnel}.

\subsection{Subsolutions versus dimension ($n\leq 4$ and $n>4$)}\label{sec_subsol}
We let
\[
\Psi(f)=f''-(1+(f')^2)\left[1-(n-1)\fracsm{f'}{r}\right].
\]
By a subsolution, we mean as usual a function $f$ such that $\Psi(f)\leq 0$. Consider

\[
\tau_{R_*}(r)=\frac{(r-R_*)^2}{2(n-1)}-\frac{1}{2}\log\left(1+(r-R_*)^2\right).
\]
The following lemma concerns when $\tau_{R_*}(r)$ is a universal subsolution family, i.e. for any $R_*$ with initial conditions $\psi(R_*)=0$, $\psi'(R_*)=0$.

\begin{lemma}\label{sub_sols}
For $n\geq 5$ and any $R_*>0$, the $R_*$-based asymptotics function $\tau_{R_*}$ is a subsolution to Equation (\ref{phiODE}) on $[R_*,\infty)$.
\end{lemma}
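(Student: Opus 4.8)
The plan is to verify the subsolution inequality $\Psi(\tau_{R_*})\leq 0$ on $[R_*,\infty)$ directly; the required initial conditions $\tau_{R_*}(R_*)=\tau_{R_*}'(R_*)=0$ hold by construction. I would begin with a change of variable, setting $t = r - R_*$ so that the range becomes $t\in[0,\infty)$, and abbreviate $Q = 1+t^2$, $u = t^2$, $s = n-1$ and $m = n-2 = s-1$; note that $m\geq 3$ is precisely the hypothesis $n\geq 5$. Two lines of differentiation give $\tau_{R_*}' = \frac{t}{s} - \frac{t}{Q}$ and $\tau_{R_*}'' = \frac{1}{s} - \frac{1-t^2}{Q^2}$, and --- a first useful point --- the factor appearing in $\Psi$ simplifies to
\[
1 - (n-1)\frac{\tau_{R_*}'(r)}{r} = \frac{R_*(1+t^2) + (n-1)t}{(1+t^2)(t+R_*)},
\]
which is manifestly positive on the whole range because $R_* > 0$. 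Consequently $\Psi(\tau_{R_*})\leq 0$ is equivalent to $\tau_{R_*}'' \leq \bigl(1+(\tau_{R_*}')^2\bigr)\bigl(1-(n-1)\tau_{R_*}'/r\bigr)$, and clearing the positive common denominator $s^2Q^3(t+R_*)$ reduces the lemma to one polynomial inequality in $t$, namely $N(t)\geq 0$ for all $t\geq 0$, where
\[
N(t) = \bigl[s^2Q^2 + u(u-m)^2\bigr]\bigl[R_*Q+st\bigr] - sQ(t+R_*)\bigl[Q^2+s(u-1)\bigr].
\]

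The structural core, which I would carry out next, is that $N$ is affine in $R_*$ and splits as
\[
N(t) = R_*\,Q\cdot B_1(u) + s\,t\cdot B_2(u),
\]
where --- after the apparent top-order terms cancel --- $B_1$ and $B_2$ turn out to be low-degree polynomials in $u = t^2$:
\[
B_1(u) = u^3 + (m^2-m)u^2 + (2m^2-1)u + (m+1)(2m+1),
\]
\[
B_2(u) = (m^2-m-3)u^2 + (3m^2+4m-1)u + (m^2+3m+1).
\]
Since $R_*$, $Q$ and $s$ are all positive and $t\geq 0$, it then suffices to show that $B_1(u)\geq 0$ and $B_2(u)\geq 0$ for every $u\geq 0$.

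For $B_1$ this is immediate once $n\geq 3$: every coefficient is nonnegative and the constant term is positive. For $B_2$ it holds \emph{exactly} under the stated hypothesis: when $m\geq 3$, i.e.\ $n\geq 5$, the three coefficients $m^2-m-3$, $3m^2+4m-1$ and $m^2+3m+1$ are all positive, so $B_2 > 0$ on $[0,\infty)$; for $n\leq 4$ the leading coefficient $m^2-m-3 = (n-2)^2-(n-2)-3$ turns negative, which is exactly where the method breaks down. Combining the two positivity statements yields $N(t) > 0$ for all $t\geq 0$, hence $\Psi(\tau_{R_*}) < 0$ on $[R_*,\infty)$, which is the assertion.

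The one genuine task is the algebraic bookkeeping that produces the splitting $N = R_*QB_1 + stB_2$ --- in particular, verifying that the would-be $u^4$ terms of $N$ and the would-be $u^3$ term of $B_2$ really do cancel. That cancellation is what collapses the problem to comparing the signs of a handful of explicit coefficients, and it concentrates the entire dimensional dependence in the single quantity $(n-2)^2-(n-2)-3$.
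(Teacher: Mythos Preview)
Your argument is correct and is essentially the paper's approach: both reduce the subsolution inequality to a polynomial in $t=r-R_*$ and then verify that every coefficient has the right sign once $n\geq 5$. Your bookkeeping is a bit tidier---you exploit the affine $R_*$-dependence and the parity in $t$ to package the coefficients into the two short polynomials $B_1,B_2$ in $u=t^2$---whereas the paper simply lists the nine shifted coefficients $d_0,\ldots,d_8$ directly; but the content, the cancellations, and the dimensional threshold (your $m^2-m-3\geq 0$ is exactly the sign of the paper's $d_6/R_*$) emerge in the same way.
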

\begin{proof}
After rewriting, we see that $\Psi(f)\leq 0$ is equivalent to the inequality
\beq
P_{n,R_*}(r) \leq 0,\quad r\in [R_*,\infty),
\eeq
where $P_{n,R_*}$ denotes the polynomial

\beq
P_{n,R_*}(r) = \sum_{k=0}^{8}c_k(n,R_*)r^k,
\eeq

\begin{align*}
c_0(n,R_*)& = R_*^9 + (7 - 5 n + 
    n^2) R_*^7 + (16 - 21 n + 9 n^2 - n^3) R_*^5  \\
    &\quad + (13 - 24 n + 15 n^2 - 
    3 n^3) R_*^3 + (2 - 5 n + 4 n^2 - n^3) R_*, \\
c_1(n,R_*)& = 8 R_*^8 + (42 - 30 n + 6 n^2) R_*^6 + (67 - 
    92 n + 42 n^2 - 5 n^3) R_*^4 \\
    &\quad + (29 - 59 n + 41 n^2 - 9 n^3) R_*^2 -1 + 2 n^2 - n^3,
 \\
c_2(n,R_*)& = 28 R_*^7 + (105 - 75 n + 15 n^2) R_*^5\\
&\quad + (108 - 158 n + 78 n^2 - 10 n^3) R_*^3 + (19 - 46 n + 37 n^2 - 9 n^3) R_*,\\
c_3(n,R_*)& = 56 R_*^6 + 20(7 - 5n + n^2) R_*^4\\
& \quad + (82 - 132 n + 72 n^2 - 10 n^3) R_*^2 + 3- 11 n + 11 n^2 - 3 n^3, \\
c_4(n,R_*)& = 70 R_*^5 + 15(7 - 5 n + n^2) R_*^3 + (28 - 53 n + 33 n^2 - 5 n^3) R_*\\
c_5(n,R_*)& = 56R_*^2 + 6(7 - 5n + n^2)R_*^2 + 3 - 8 n + 6 n^2 - n^3,\\
c_6(n,R_*)& = (7 - 5n + n^2 + 28R_*^2)R_*,\\
c_7(n,R_*)& = 8R_*^2,\\
c_8(n,R_*)& = -R_*.
\end{align*}

The coefficients here will generally have mixed signs (even when $n$ is large), reflecting the fact that $P_{n,R_*}(r)$ generally has roots in $[0,\infty)$, and that on this interval $\tau_{R_*}$ is generally not a subsolution. However, changing the base point of the polynomial, to make it centered at $R_*$, we have

\beq
P_{n,R_*}(r) = \sum_{k=0}^{8}d_k(n,R_*)(r-R_*)^k,
\eeq

with the new coefficients

\begin{align*}
d_0(n,R_*)& = (-2 n^2 + 5n - 3) R_*,\\
d_1(n,R_*)& = -n^3 + 2 n^2 - 1,\\
d_2(n,R_*)& = (- 4 n^2 + 13 n - 10) R_*,\\
d_3(n,R_*)& = - 3 n^3 + 11 n^2 - 11 n + 3,\\
d_4(n,R_*)& = (- 3 n^2 + 13n -13) R_*,\\
d_5(n,R_*)& = - n^3 + 6 n^2 - 8 n + 3,\\
d_6(n,R_*)& = (- n^2 + 5 n -7) R_*,\\
d_7(n,R_*)&= 0,\\
d_8(n,R_*)& = - R_*.
\end{align*}

We see immediately that when $n\geq 5$, then for $R_*\geq 0$ all the coefficients $d_k(n,R_*)$ are non-negative. Thus, all the $P_{n,R_*}$ have no zeroes on the end $[R_*,\infty)$.

\end{proof}

Interestingly, for the subcritical dimensions $n=2,3,4$, it is generally not true that the natural quantity $\Psi(\tau_{R_*})$ has a definite sign in $[R_*,\infty)$. However, a simple additional assumption still guarantees the absence of zeros, namely:

\begin{lemma}
$\tau_{R_*}$ is a subsolution on $[R_*,\infty)$ whenever $R_*\geq 2$.
\end{lemma}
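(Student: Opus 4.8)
The plan is to reuse the polynomial reduction already set up in the proof of Lemma \ref{sub_sols}: the condition $\Psi(\tau_{R_*})\le 0$ on $[R_*,\infty)$ is equivalent to $P_{n,R_*}(r)\le 0$ there, and after recentering we must show
\[
Q_{n,R_*}(s):=\sum_{k=0}^{8}d_k(n,R_*)\,s^k\le 0\qquad\text{for all }s=r-R_*\ge 0,
\]
now under the hypothesis $R_*\ge 2$ rather than $n\ge 5$. First I would record the signs of the coefficients $d_k(n,R_*)$ for the remaining dimensions $n=2,3,4$ (there are only three cases, each a genuine cubic in $R_*$ or lower), observing which $d_k$ can fail to be nonnegative: inspection of the list shows that for $n=2,3,4$ the only potentially offending coefficients are the odd ones $d_1,d_3,d_5$ (the even ones $d_0,d_2,d_4,d_6,d_8$ carry the overall factor $R_*$ times a quadratic in $n$ that is $\le 0$ on $\{2,3,4\}$, hence are $\le 0$ as needed — wait, we want $\le 0$ overall, so in fact it is the coefficients that are \emph{positive} that are the enemy). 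So the real content is: for $n\in\{2,3,4\}$, identify the positive $d_k(n,R_*)$ and dominate their contribution by the negative ones, using $R_*\ge 2$.

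The mechanism I expect to work is pairing: group the polynomial as $\sum_j s^{2j}\bigl(d_{2j}(n,R_*)+d_{2j+1}(n,R_*)s\bigr)$ and show each bracket $d_{2j}+d_{2j+1}s$ is $\le 0$ for $s\ge 0$. Since $d_0,d_2,d_4,d_6\le 0$ and $d_8\le 0$ automatically for these $n$, it suffices that $d_1,d_3,d_5\le 0$ whenever the paired even coefficient is strictly negative, or more robustly that $d_{2j}+d_{2j+1}s\le 0$; but $d_{2j+1}$ is a constant in $R_*$ while $d_{2j}=R_*\cdot(\text{quadratic in }n)$, so for large $s$ a positive $d_{2j+1}$ would eventually dominate. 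This forces a slightly finer argument near $s\to\infty$: there the top term $d_8 s^8=-R_* s^8$ dominates everything, so the danger is only on a bounded $s$-range, and on that range one uses $R_*\ge 2$ to make the negative even coefficients (which scale linearly in $R_*$) beat the fixed-size positive odd ones. Concretely I would extract, for each $n\in\{2,3,4\}$, the explicit values $d_1=-n^3+2n^2-1$, $d_3=-3n^3+11n^2-11n+3$, $d_5=-n^3+6n^2-8n+3$: one checks $d_1(2)=-1$, $d_1(3)=-10$, $d_1(4)=-33$ (all negative); $d_3(2)=1$, $d_3(3)=-18$, $d_3(4)=-73$; $d_5(2)=3$, $d_5(3)=6$, $d_5(4)=3$. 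So only $d_3(2)$ and $d_5(n)$ for $n=2,3,4$ are positive, plus $d_3(2)$ — a very short finite list of troublemakers.

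Thus the proof reduces to: for $n=2$, show $s^4(d_4+d_3 s\cdot s^{-1}\cdots)$ — more precisely show $(-2n^2+5n-3)R_* + (-4n^2+13n-10)R_* s^2+(-3n^2+13n-13)R_* s^4+(-n^2+5n-7)R_* s^6 - R_* s^8$ (the even part, all coefficients $\le 0$ for $n\le 4$) dominates $(-n^3+2n^2-1)s+(-3n^3+11n^2-11n+3)s^3+(-n^3+6n^2-8n+3)s^5$ in absolute value. Dividing by $R_*$ and using $R_*\ge 2$, i.e. $1/R_*\le 1/2$, it suffices to show the odd part is $\le \tfrac12\times(\text{negative even part in absolute value})$ pointwise in $s\ge 0$ for each $n\in\{2,3,4\}$ — a one-variable polynomial inequality in $s$ with explicit integer coefficients. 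I would verify these three inequalities by the standard device of matching powers: write the odd-part bound as a sum of terms $a_k s^k$ and absorb each into the adjacent even-part term $\tfrac12|b_{k-1}|s^{k-1}$ or $\tfrac12|b_{k+1}|s^{k+1}$ via AM–GM / the elementary estimate $s^k\le \tfrac12(s^{k-1}+s^{k+1})$ when $s$ is away from $1$, handling the region near $s=1$ (equivalently $r\approx R_*+1$) by direct numerical evaluation. The main obstacle is purely bookkeeping: making the coefficient-chasing airtight for $n=2$, where $d_3>0$ and $d_5>0$ simultaneously and the available negative coefficients are only linear in $R_*$; I expect $R_*\ge 2$ to be exactly what is needed (and indeed the hypothesis is stated as such), so the estimates should close with room to spare, but the $n=2$ case will require the most careful splitting of the $s$-range.
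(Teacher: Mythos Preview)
Your approach is exactly the one the paper indicates: feed the problem back through the polynomial $\sum_k d_k(n,R_*)(r-R_*)^k$ from Lemma \ref{sub_sols} and verify nonpositivity on $[R_*,\infty)$ case by case for $n=2,3,4$. That is all the paper claims to do (``easily verified via the polynomial with coefficients $d_k$'').

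However, your organizing premise for the verification --- that the even-indexed coefficients $d_0,d_2,d_4,d_6,d_8$ are all $\le 0$ for $n\in\{2,3,4\}$ --- is false. For $n=2$ one has
\[
d_4(2,R_*)=(-3\cdot 4+13\cdot 2-13)R_*=R_*>0,\qquad d_2(2,R_*)=(-16+26-10)R_*=0,
\]
so at $n=2$ the even part of $Q_{2,R_*}(s)$ is \emph{not} term-by-term nonpositive, and your even/odd pairing scheme (dominate the positive odd coefficients by the negative even ones, scaled by $R_*\ge 2$) cannot close as stated: there is no negative $d_4$ to absorb anything, and $d_2$ contributes nothing. (There are also minor arithmetic slips in your values of $d_3(3)$ and $d_3(4)$, though their signs are as you claim.)

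This is not fatal to the strategy, only to the particular grouping. For $n=2$ the recentered polynomial is
\[
Q_{2,R_*}(s)=-R_*-s+s^3+R_*s^4+3s^5-R_*s^6-R_*s^8,
\]
and one checks directly that $R_*(s^8+s^6-s^4+1)\ge 3s^5+s^3-s$ for all $s\ge 0$ once $R_*\ge 2$ (note $s^8+s^6-s^4+1>0$ everywhere, and the ratio $(3s^5+s^3-s)/(s^8+s^6-s^4+1)$ is bounded above by $2$). So the fix is simply to regroup the $n=2$ case rather than rely on a sign pattern that does not hold; the cases $n=3,4$ proceed as you outlined.
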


This last lemma is also easily verified via the polynomial with coefficients $d_k$, given in the proof of Lemma \ref{sub_sols}.

\begin{proof}[Proof of Lemma \ref{in_funnel}]
It is easily seen from the differential equation, that for $\varphi(R)=-\infty$, then there is some $R<R_*<\infty$, such that $\varphi(R_*)=0$. Then integrating
\[
\left(\arctan \varphi\right)'=\frac{\varphi'}{1+\varphi^2} = 1-\frac{n-1}{r}\varphi,
\]
with $\varphi(R)=-\infty$ and $\varphi(R_*)=0$, we see that since $\varphi<0$ on $[R,R_*]$:
\[
\frac{\pi}{2}=\arctan \varphi(R_*) - \arctan \varphi(R) = \int_R^{R_*}\left[1-\frac{n-1}{r}\varphi\right]\geq R-R_*.
\]
In other words,
\begin{align*}
& R_*\leq R+\fracsm{\pi}{2}.
\end{align*}

To estimate the height at the point $R_*$, look at the equation for graphs over the $V$-axis and let for convenience $\eta(\xi)=r(-\xi)$:
\[
\frac{\eta''}{1+(\eta')^2}-\eta'=\frac{n-1}{\eta}.
\]
Then $\eta(0)=R$, $\eta'(0)=0$, $\eta'>0$, and the value $d = -V(R_*)$, where $\eta(d)=R_*$, is characterized by $\eta'(d)=+\infty$. Thus again
\[
\frac{\pi}{2}=\arctan \eta'(d) - \arctan \eta'(0) \geq \int_0^{d}\frac{n-1}{\eta(\xi)} d\xi\geq \frac{(n-1)d}{R_*}.
\]

In other words,
\begin{align*}
& d\leq \frac{\pi}{2}\frac{R_*}{n-1}\leq \frac{\pi}{2}\frac{R+\fracsm{\pi}{2}}{n-1}.
\end{align*}

Combining this with Equation (\ref{up_funnel}) and the previous Lemma (in the version that $\tau_{R_*}$ is a subsolution on $[R_*,\infty)$ whenever $R_*\geq 2$), finishes the proof that the solution is contained in the solid funnel $\mathcal{F}_{R}$ in the statement of Lemma \ref{in_funnel}.

\end{proof}

Before finishing the proof of Theorem \ref{NoFunnels}, we remind the reader of the strong maximum principle, or "touching" principle. We include for completeness the short proof (which follows Corollary 1.18 in \cite{CM} closely).

\begin{lemma}[Strong maximum principle for translating solitons]\label{MP}
Let $\Sigma_1, \Sigma_2\subseteq \Reals^{n+1}$ be two smooth, connected, embedded, self-translating $n$-dimensional hypersurfaces, with translation direction $\mathbf{a}\in\Reals^{n+1}$,
\beq
H_{\Sigma_j}=\langle \mathbf{a},\nu_{\Sigma_j}\rangle,\quad |\mathbf{a}|=1,\quad j=1,2.
\eeq
Suppose $x_0\in\Sigma_1\cap\Sigma_2 \neq \emptyset$, and that within an open $\Reals^{n+1}$-ball $B_{\delta}(x_0)$, the surface $\Sigma_1$ lies entirely on one side of $\Sigma_2$, i.e. $\Sigma_1\cap B_{\delta}(x_0)$ is contained in the closure of either one of (when $\delta>0$ is possibly chosen smaller) the two connected components of $B_{\delta}(x_0)\setminus\Sigma_2$.

Then the surfaces coincide: $\Sigma_1=\Sigma_2$.
\end{lemma}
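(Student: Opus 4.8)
The plan is to establish the strong maximum principle by reducing it to the classical elliptic strong maximum principle (in the form of the E.~Hopf lemma, or the comparison principle of Corollary~1.18 in \cite{CM}) for the difference of two graphs over a common tangent hyperplane. First I would fix the point $x_0 \in \Sigma_1 \cap \Sigma_2$. Since both hypersurfaces are smooth and embedded and tangent at $x_0$ with a common unit normal $\nu$ (after adjusting orientations; if the normals were opposite the one-sidedness would still force tangency), I can choose coordinates so that $x_0$ is the origin and the common tangent plane at $x_0$ is $\{x_{n+1} = 0\}$, with $\nu = e_{n+1}$. By the implicit function theorem, after possibly shrinking $\delta > 0$, there is a ball $B \subseteq \Reals^n$ around the origin such that $\Sigma_j \cap B_\delta(x_0)$ is the graph of a smooth function $u_j : B \to \Reals$ with $u_j(0) = 0$ and $\nabla u_j(0) = 0$, for $j = 1, 2$.

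\smallskip

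The next step is to write the translating soliton equation (\ref{TransEq}) in graphical form and identify the relevant operator. A graph $x_{n+1} = u(p)$ over $\{x_{n+1}=0\}$ with translation direction $\mathbf{a} = e_{n+1}$ satisfies the quasilinear elliptic PDE
\[
\diverg\!\left(\frac{\nabla u}{\sqrt{1 + |\nabla u|^2}}\right) = \frac{1}{\sqrt{1 + |\nabla u|^2}},
\]
which we may rewrite as $Q[u] := \sum_{i,j} a_{ij}(\nabla u)\, \partial_i \partial_j u - \frac{1}{\sqrt{1 + |\nabla u|^2}} = 0$, where $a_{ij}(q) = \delta_{ij} - \frac{q_i q_j}{1 + |q|^2}$ is smooth, symmetric, and uniformly elliptic on compact sets of gradient values. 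Both $u_1$ and $u_2$ solve $Q[u_j] = 0$ on $B$. Setting $w = u_1 - u_2$, a standard difference-quotient / mean value argument (writing $Q[u_1] - Q[u_2]$ and using the fundamental theorem of calculus along the segment between the $1$-jets of $u_1$ and $u_2$) shows that $w$ satisfies a homogeneous linear second-order equation $L w := \sum_{i,j} \tilde a_{ij}(p)\, \partial_i \partial_j w + \sum_i \tilde b_i(p)\, \partial_i w = 0$ on $B$, where $\tilde a_{ij}, \tilde b_i$ are bounded measurable (indeed continuous) coefficients and $(\tilde a_{ij})$ is uniformly elliptic on $B$ (shrinking $B$ so that $|\nabla u_j|$ stays small). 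Note there is no zeroth-order term, since the right-hand side $\frac{1}{\sqrt{1+|\nabla u|^2}}$ depends only on the gradient; this is what makes the clean strong maximum principle applicable.

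\smallskip

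Now the one-sidedness hypothesis translates to: $w \geq 0$ on $B$ or $w \leq 0$ on $B$ (say $w \geq 0$, after relabeling), and $w(0) = 0$, so $w$ attains an interior minimum value $0$ at the origin. By the strong maximum principle for $L$ (e.g.\ \cite[Cor.~1.18]{CM}, or Gilbarg--Trudinger), a solution of $Lw = 0$ that attains an interior extremum is constant; hence $w \equiv 0$ on $B$, i.e.\ $\Sigma_1$ and $\Sigma_2$ agree in a neighborhood of $x_0$. The final step is a standard connectedness/open-closed argument: the set $A = \{x \in \Sigma_1 : x \in \Sigma_2 \text{ and } \Sigma_1, \Sigma_2 \text{ coincide near } x\}$ is nonempty (contains $x_0$), open in $\Sigma_1$ by construction, and closed in $\Sigma_1$ because at any limit point the two surfaces are tangent (limits of coinciding pieces) and one-sided, so the local argument above applies again; since $\Sigma_1$ is connected, $A = \Sigma_1$, and therefore $\Sigma_1 \subseteq \Sigma_2$, which forces $\Sigma_1 = \Sigma_2$ by symmetry of the roles (or by the same argument with the two surfaces interchanged, using that $\Sigma_2$ is connected).

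\smallskip

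The main obstacle — really the only subtle point — is verifying that the linearized difference operator $L$ genuinely has no zeroth-order term and is uniformly elliptic on a fixed small ball, so that the strong maximum principle applies verbatim; this hinges on the structural fact that in the translating soliton equation the inhomogeneity $\langle e_{n+1}, \nu \rangle = (1+|\nabla u|^2)^{-1/2}$ involves only first derivatives of $u$, so that differencing produces only first-order terms. Everything else is the routine reduction to graphs via the implicit function theorem and the standard continuation argument; I would state these briefly and refer to \cite{CM} for the scalar strong maximum principle itself, as the excerpt indicates the authors intend.
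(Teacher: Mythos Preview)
Your overall strategy matches the paper's: write both surfaces locally as graphs over the common tangent hyperplane at $x_0$, show the difference satisfies a linear uniformly elliptic equation with no zeroth-order term, apply the scalar strong maximum principle, and then continue. However, there is a genuine gap in how you set up the graphical PDE.

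After rotating so that the common unit normal $\nu$ at $x_0$ becomes $e_{n+1}$, you then write the translator equation as though the translation direction were also $e_{n+1}$. These two directions are in general distinct: the rotation $\mathcal{R}$ sending $\nu$ to $e_{n+1}$ carries $\mathbf{a}$ to some unit vector $\boldsymbol{\mu} = \mathcal{R}\mathbf{a}$, and the correct graph equation is
\[
\diverg\!\left(\frac{\nabla u_j}{\sqrt{1+|\nabla u_j|^2}}\right)
\;=\;
\frac{\mu_{n+1} - \sum_{i=1}^n \mu_i\,\partial_i u_j}{\sqrt{1+|\nabla u_j|^2}},
\]
not the special case $\boldsymbol{\mu} = e_{n+1}$ that you wrote down. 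Fortunately your key structural observation survives: this right-hand side still depends only on $\nabla u_j$, so subtracting the two equations again yields a linear operator in $w = u_1 - u_2$ with no zeroth-order term, and the strong maximum principle applies exactly as you intend. The paper carries out precisely this computation, exhibiting the extra first-order contributions coming from the general $\boldsymbol{\mu}$ and checking that they add only to the $b_j$-coefficients.

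A smaller secondary point: in your continuation step you assert that at a limit point of the coincidence set the two surfaces are ``tangent and one-sided'', but the one-sidedness at such a point is not immediate from the hypotheses (which only guarantee one-sidedness near the original $x_0$). The paper avoids this by invoking weak unique continuation via the real analyticity of the $\Sigma_j$, which is the cleaner route from local coincidence to $\Sigma_1 = \Sigma_2$.
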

\begin{proof}
Let $x_0\in\Sigma_1\cap\Sigma_2$ be a point of contact. By possibly changing the orientations, we may assume that also the normals coincide, and we denote $\mathbf{\nu}_0:=\mathbf{\nu}_{\Sigma_1}(x_0)=\mathbf{\nu}_{\Sigma_2}(x_0)$.

After a rotation $\mathcal{R}$ such that $\mathcal{R} \nu_0 =e_{n+1}$, we may write the surfaces $\Sigma_j$, $k=1,2$, as graphs $(p,u_j(p))$ over the $(x_1,x_2,\ldots,x_n,0)$-hyperplane, by which the translation direction rotates to some possibly different unit vector $\boldsymbol\mu=\mathcal{R}\mathbf{a}$. We write $x_0=(p_0, u(p_0))$.

The equations satisfied by the $u_j$ then take the form:
\beq
\diverg\left( \frac{\nabla u_j}{\sqrt{1+|\nabla u_j|^2}}\right)=\frac{\boldsymbol\mu\cdot\begin{pmatrix}
1\\
-\nabla u_j
\end{pmatrix}
}{\sqrt{1+|\nabla u_j|^2}},\quad k=1,2.
\eeq

Subtracting one equation from the other and rewriting, we get that in terms of $v(p)=u_2(p)-u_1(p)$,
\beq\label{Equ_MP}
0=\diverg\Bigg(\sum_{i,j=1}^n (a_{ij}\partial_j v)\partial_i\Bigg) + \sum_{j=1}^n b_j \partial_j v,
\eeq
for coefficients $a_{ij}=a_{ij}(p)$ and $b_j=b_j(p)$ depending on $\nabla u_1(p)$ and $\nabla u_2(p)$.

Note that compared to the classical minimal surface case (which corresponds to $\boldsymbol\mu =0$), the only additional terms are of the form
\begin{align*}
&\frac{\boldsymbol\mu\cdot\begin{pmatrix}
1\\
-\nabla u_1
\end{pmatrix}
}{\sqrt{1+|\nabla u_1|^2}}-\frac{\boldsymbol\mu\cdot\begin{pmatrix}
1\\
-\nabla u_2
\end{pmatrix}
}{\sqrt{1+|\nabla u_2|^2}}=\frac{\boldsymbol\mu \cdot
\begin{pmatrix}
\nabla (u_2-u_1)\cdot\nabla (u_1 + u_2)\\
\nabla (u_2-u_1)
\end{pmatrix}}{\sqrt{1+|\nabla u_1|^2}\sqrt{1+|\nabla u_2|^2}}\\
&\qquad\qquad\qquad - (\boldsymbol\mu\cdot \nabla u_2)\frac{\nabla (u_2-u_1)\cdot\nabla (u_1 + u_2)}{\sqrt{1+|\nabla u_1|^2}\sqrt{1+|\nabla u_2|^2}(\sqrt{1+|\nabla u_1|^2}+\sqrt{1+|\nabla u_2|^2})},
\end{align*}
so that these additional terms do not add any "constants" (i.e. no 0-jet terms), but contribute solely to the $b_i$-coefficients in (\ref{Equ_MP}).

Now, since it was arranged that $\nabla u_1(p_0)=\nabla u_2(p_0)=0$, then also on some small ball around $p_0$, it holds that $|\nabla u_1|$ and $|\nabla u_2|$ are sufficiently small so that
\[
a_{ij}(p) \xi_i \xi_j \geq \lambda |\xi|^2,
\]
for some uniform local ellipticity constant $\lambda >0$.

Finally, by the usual strong maximum principle for (\ref{Equ_MP}), we thus conclude that $\Sigma_1$ cannot be entirely on one side of $\Sigma_2$ (within $B_\delta(x_0)$, for some small $\delta>0$) unless $\Sigma_1\cap B_\delta(x_0) = \Sigma_2\cap B_\delta(x_0)$. Then by weak unique continuation, which follows for example from the real analyticity of the $\Sigma_j$, and using connectedness, we conclude that $\Sigma_1=\Sigma_2$.
\end{proof}

We also rely essentially on the following compactness lemma.
\begin{lemma}\label{compactness}
The closure of the intersection of the complement of the funnel $\mathcal{F}^\lambda_{R_0}=\mathcal{F}^\lambda_{R_0,0}$ with the set traced out by all the winglike solutions of smaller $R$-values,
\beq
\overline{\bigcup_{0\leq R\leq R_0}\mathcal{W}_R\setminus \mathcal{F}^\lambda_{R_0}}
\eeq
is a compact subset of $\Reals^{n+1}$.
\end{lemma}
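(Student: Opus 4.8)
The plan is to show that $K:=\overline{\bigcup_{0\le R\le R_0}\mathcal{W}_R\setminus\mathcal{F}^\lambda_{R_0}}$ is bounded; being a closure it is automatically closed, and a closed bounded subset of $\Reals^{n+1}$ is compact. The first move is to eliminate the surfaces altogether: by Lemma \ref{in_funnel} one has $\mathcal{W}_R\subseteq\mathcal{F}^\lambda_R:=\mathcal{F}^\lambda_{R,0}$ for every $R\in[0,R_0]$, so
\[
\bigcup_{0\le R\le R_0}\bigl(\mathcal{W}_R\setminus\mathcal{F}^\lambda_{R_0}\bigr)\ \subseteq\ \bigcup_{0\le R\le R_0}\bigl(\mathcal{F}^\lambda_R\setminus\mathcal{F}^\lambda_{R_0}\bigr),
\]
and it suffices to bound the right-hand side, a purely explicit question about the profile functions of Definition \ref{def_funnel}. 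Write $f_-^{(\rho)}$ for the lower profile of $\mathcal{F}^\lambda_\rho$ (replace $R_0$ by $\rho$ in the formula for $f_-$); the upper profile $f_+$ does not depend on the aperture. If $(p,x_{n+1})\in\mathcal{F}^\lambda_R\setminus\mathcal{F}^\lambda_{R_0}$, then $|p|\ge R$ and $f_-^{(R)}(|p|)\le x_{n+1}\le f_+(|p|)$, while the only ways such a point can fail to lie in $\mathcal{F}^\lambda_{R_0}$ are $|p|<R_0$ or $x_{n+1}<f_-^{(R_0)}(|p|)$, since $x_{n+1}\le f_+(|p|)$ already holds; in the second case $f_-^{(R)}(|p|)<f_-^{(R_0)}(|p|)$.

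The heart of the matter is a uniform comparison of the two lower profiles far from the axis: there is a constant $\rho_0=\rho_0(n,R_0,\lambda)\ge R_0+2$ such that
\[
f_-^{(R)}(r)\ \ge\ f_-^{(R_0)}(r)\qquad\text{whenever }r\ge\rho_0\text{ and }0\le R\le R_0.
\]
To establish it, put $a=r-R_0-2$ and $s=R_0-R\in[0,R_0]$, so $r-R-2=a+s$; expanding the squares and using $\log(1+t)\le t$ for $t\ge0$ gives
\[
f_-^{(R)}(r)-f_-^{(R_0)}(r)\ \ge\ \frac{s\,(2a+s+\pi)}{2(n-1)}-\frac{\lambda\,s\,(2a+s)}{2\,(1+a^2)}.
\]
For $s=0$ this is $\ge 0$; for $s>0$ one divides through by $s$ and compares the two terms via $\tfrac{2a+s+\pi}{2(n-1)}\ge\tfrac{a}{n-1}$ and, for $a\ge1$, $\tfrac{2a+s}{1+a^2}\le\tfrac{2a+R_0}{a^2}\le\tfrac{2+R_0}{a}$, finding the difference non-negative as soon as $a\ge\max\!\bigl(1,\sqrt{(n-1)\lambda(2+R_0)/2}\,\bigr)$. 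Thus $\rho_0:=R_0+2+\max\!\bigl(1,\sqrt{(n-1)\lambda(2+R_0)/2}\,\bigr)$ does the job; the key point is that the factor $s=R_0-R$, which may be arbitrarily small, cancels before the final comparison, so $\rho_0$ is genuinely independent of $R$.

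Granting this comparison, the second alternative above forces $|p|<\rho_0$ and the first forces $|p|<R_0\le\rho_0$, so every point of $\bigcup_{0\le R\le R_0}(\mathcal{F}^\lambda_R\setminus\mathcal{F}^\lambda_{R_0})$ satisfies $|p|<\rho_0$. For such a point, membership in some $\mathcal{F}^\lambda_R$ with $R\le|p|$ bounds the last coordinate above by $x_{n+1}\le f_+(|p|)\le f_+(\rho_0)=\tfrac{\rho_0^2}{2(n-1)}+1$, and below by $x_{n+1}\ge f_-^{(R)}(|p|)\ge-\tfrac{\lambda}{2}\log(1+\rho_0^2)-\tfrac{\pi(R_0+\pi/2)+4}{2(n-1)}=:-C_1$, where we dropped the non-negative quadratic term and used $0\le R\le|p|<\rho_0$ together with $\rho_0>2$ to get $(|p|-R-2)^2<\rho_0^2$. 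Hence $\bigcup_{0\le R\le R_0}(\mathcal{W}_R\setminus\mathcal{F}^\lambda_{R_0})$ lies in the compact set $\{\,x_1^2+\cdots+x_n^2\le\rho_0^2,\ -C_1\le x_{n+1}\le\tfrac{\rho_0^2}{2(n-1)}+1\,\}$, so its closure $K$ is a closed subset of a compact set, hence compact.

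I expect the only genuine difficulty to be exactly the uniformity in $R$ of the profile comparison near infinity, valid all the way down to the (possibly degenerate) value $R=0$; the cancellation of $R_0-R$ noted above is precisely what makes $\rho_0$ depend only on $n$, $R_0$, $\lambda$, and the limit hypersurface $\mathcal{W}_0$ raises no further issue since Lemma \ref{in_funnel} already covers all $R\ge0$.
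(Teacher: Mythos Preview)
Your proof is correct and follows exactly the paper's route: reduce via Lemma \ref{in_funnel} to bounding $\bigcup_{0\le R\le R_0}\mathcal{F}^\lambda_R\setminus\mathcal{F}^\lambda_{R_0}$, then observe that as $R$ decreases the upper profile $f_+$ is fixed while the lower profile $f_-^{(R)}$ eventually dominates $f_-^{(R_0)}$ uniformly in $R$. The paper leaves that last monotonicity observation as ``follows directly by definition of the funnel condition,'' whereas you supply the explicit comparison (the cancellation of $s=R_0-R$ and the resulting $R$-independent $\rho_0$), which is precisely the content the paper's parenthetical remark about ``relative pointwise directions of motion of the $f_\pm$'' is pointing at.
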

\begin{proof}
Firstly, Lemma \ref{in_funnel} reduces the statement to showing that the right-hand-side in
\[
\bigcup_{0\leq R\leq R_0}\mathcal{W}_R\setminus \mathcal{F}^\lambda_{R_0}\subseteq \bigcup_{0\leq R\leq R_0}\mathcal{F}^\lambda_R\setminus \mathcal{F}^\lambda_{R_0}
\]
is a bounded subset. This follows directly by definition of the funnel condition (note the importance of the relative pointwise directions of motion of the $f_{\pm}$ as the $R$-value decreases) in Definition \ref{def_funnel}.
\end{proof}

\begin{proof}[Proof of Theorem \ref{NoFunnels}]

Let $\Sigma^n\subseteq\Reals^{n+1}$ be a smooth, complete, $e_{n+1}$-self-translating $n$-dimensional hypersurface, which avoids the parabolic funnel $\mathcal{F}^\lambda_{R_0,y_0}$ of aperture $R_0\geq0$ and logarithmic growth $\lambda>1$. Without loss of generality, we may assume that $\Sigma$ is connected. By a parallel translation, under which (\ref{TransEq}) is naturally invariant, we may normalize the situation further by assuming that $y_0=(0,\ldots,0)\in\Reals^{n+1}$, i.e. we assume for definiteness that the funnel is positioned on the "principal" axis, and $\mathcal{F}^\lambda_{R_0}=\mathcal{F}^\lambda_{R_0,0}$.

We consider separately the cases corresponding to whether or not every member of the Clutterbuck-Schn\"u{}rer-Schulze soliton family of smaller aperture (and hence the Altschuler-Wu soliton) contain $\Sigma$ on one side or not, and if so, on which side. Namely, consider the family $\mathcal{W}_R$ for $R\in [0,R_0]$, where $R=0$ corresponds to the Altschuler-Wu soliton \cite{AW94}. We denote by $u_0(p)$ the graph generating by revolution the one-ended Altschuler-Wu soliton.

Assume first that (we will notice that for this case that $\lambda = 1$ is admissible),
\beq
\bigcap_{R\in [0,R_0]}\Sigma \cap \mathcal{W}_R \neq \emptyset.
\eeq

By Lemma \ref{in_funnel} and the exterior funnel assumption, we have
\[
\Sigma \cap \mathcal{W}_{R_0}\subseteq \Sigma \cap \mathcal{F}^\lambda_{R_0}= \emptyset,
\]
so the supremum of all values with non-trivial intersection is well-defined:
\beq\label{R_tilde_def}
\tilde{R}=\sup\big\{R\leq R_0:\Sigma\cap \mathcal{W}_R\neq\emptyset\big\}.
\eeq

To see that the value $\tilde{R}$ is assumed, consider a sequence $R_i\to \tilde{R}$ as $i\to\infty$, and corresponding points $x_i\in \Sigma\cap \mathcal{W}_{R_i}$. Since by the assumptions
\beq
x_i\in\Sigma\cap \mathcal{W}_{R_i}\:\subseteq\:\Sigma\cap\bigcup_{R\leq R_0}\mathcal{W}_{R}\:\subseteq\: \bigcup_{R\leq R_0}\mathcal{W}_{R}\setminus\mathcal{F}^\lambda_{R_0},
\eeq
the compactness in Lemma \ref{compactness} and properness of the embedding $\Sigma\hookrightarrow \Reals^{n+1}$ ensures convergence of a subsequence. That is, there exists $\tilde{x}\in\Reals^{n+1}$ such that $x_i\to \tilde{x}$ as $i\to\infty$, and $\tilde{x}\in\Sigma$. By joint continuity of the family $R\mapsto \mathcal{W}_R$ of winglike solutions in its parameters (by the standard lemma ensuring local smooth dependency on initial conditions, in the theory of ordinary differential equations), also $\tilde{x}\in \mathcal{W}_{\tilde{R}}$. Thus all in all $\tilde{x}\in\Sigma\cap \mathcal{W}_{\tilde{R}}$.

By the definition of $\tilde{R}$ in (\ref{R_tilde_def}), one sees that for some sufficiently small $\Reals^{n+1}$-ball $B_\delta(\tilde{x})$, it holds that $\mathcal{W}_{\tilde{R}}\cap B_{\delta}(\tilde{x})$ lies entirely within one connected component of $B_{\delta}(\tilde{x})\setminus\Sigma$. Namely, first choose $\delta>0$ small enough that the intersection of $B_{\delta}(\tilde{x})$ with $\Sigma$ has the topology of an $\Reals^n$-ball, as does every $\mathcal{W}_{R}\cap B_{\delta}(\tilde{x})$ for values of $R$ near $\tilde{R}$. Thus, in particular, $B_{\delta}(\tilde{x})\setminus\Sigma$ has two components. Suppose now that both of these components contain points from $\mathcal{W}_{\tilde{R}}\cap B_{\delta}(\tilde{x})$. Then, by perturbing slightly (using again continuous dependence of the family on $R$) the value $R$, one sees that $\tilde{R}$ being the supremum value in (\ref{R_tilde_def}) is violated. Thus, all conditions in the maximum principle Lemma \ref{R_tilde_def} being satisfied, we see that $\Sigma = \mathcal{W}_{\tilde{R}}$, which contradicts Lemma \ref{in_funnel}.

Assume next that
\beq\label{trace_empty}
\bigcap_{R\in [0,R_0]}\Sigma \cap \mathcal{W}_R = \emptyset,
\eeq
and furthermore for all $p=(x_1,x_2,\ldots x_n)$, the only points for which $(p,x_{n+1})\in\Sigma$ occurs have $x_{n+1}<u_0(p)$, i.e. all such points are positioned strictly lower w.r.t. the $e_{n+1}$-direction compared to the corresponding value on $\mathcal{W}_0$. In that case, consider the parallel translates
\[
\mathcal{W}^s_0= \mathcal{W}_0 - se_{n+1},
\]
where $s\in [0,\infty)$. Since, $\mathcal{W}_0$ being an entire graph, this gives a foliation of the region $\{x_{n+1}\leq u_0(p)\}$, there must exist some value $s_0$ such that
\beq
\Sigma\cap\mathcal{W}^{s_0}_0\neq \emptyset.
\eeq
Then let
\beq
\tilde{s} = \inf \{s: \Sigma\cap\mathcal{W}^s_0\neq \emptyset\}
\eeq

In this case, the asymptotics in (\ref{CSS_asympt}), again suffice to conclude the property of compactness of the additional set which is traced out when the parameter is changed, i.e. that
\beq
\Sigma\cap\bigcup_{0\leq s\leq s_0}\mathcal{W}^s_0
\eeq
is a compact set, provided that one f.ex. takes $\lambda >1$ as in our definition. The same contradiction as before is reached using Lemma \ref{MP} (here one uses once again $\lambda >1$, which in particular gives that $\mathcal{W}_0$ does not admit a funnel in its exterior).

Finally, in the case that (\ref{trace_empty}) holds while for $p=(x_1,x_2,\ldots x_n)$, the only points for which $(p,x_{n+1})\in\Sigma$ occurs have $x_{n+1}>u_0(p)$, and again (\ref{CSS_asympt}) ensures compactness of the intersection with the translates of the $\mathcal{W}_0$ soliton in the $+e_{n+1}$-direction, and with the funnel condition Definition \ref{def_funnel}, the contradiction is again reached (with $\lambda=1$ also being permissible in this last case).
\end{proof}

\bibliographystyle{amsalpha}

\end{document}